\newtheorem{thm}{Theorem}[section]
\newtheorem*{thm*}{Theorem}
\newtheorem{lem}[thm]{Lemma}
\theoremstyle{definition}
\newtheorem{rem}{Remark}[section]
\numberwithin{equation}{section}
\newcommand{\inv}{^{-1}}
\newcommand{\mbz}{\mathbb{Z}}
\newcommand{\mbr}{\mathbb{R}}
\newcommand{\mce}{\mathcal{E}}
\newcommand{\mcp}{\mathcal{P}}
\newcommand{\newabstract}[1]{%
	\par\bigskip
	\csname otherlanguage*\endcsname{#1}%
	\csname captions#1\endcsname
	\item[\hskip\labelsep\scshape\abstractname.]
}
\begin{document}

	\baselineskip=17pt

	\title[Joint extreme values of the Riemann zeta function at harmonic points]{Joint extreme values of the Riemann zeta function at harmonic points}

	\author{Qiyu Yang\textsuperscript{1}}
    \author{Shengbo Zhao\textsuperscript{2}}
	\address{1.School of Mathematics and Statistics, Henan Normal University, Xinxiang 453007, CHINA}
	\address{2.School of Mathematical Sciences, Key Laboratory of Intelligent Computing and Applications(Ministry of Education), Tongji University, Shanghai 200092, P. R. China}
	\email{qyyang.must@gmail.com}
	\email{shengbozhao@hotmail.com}

	\begin{abstract} 
	   Using the resonance method, we obtain refined estimates for joint extreme values of the Riemann zeta function at harmonic points, improving upon Levinson’s 1972 results and providing new insight into the behavior of the Riemann zeta function. Our proof is primarily based on Dirichlet series theory and the truncated Euler product for the Riemann zeta function. As a corollary, we can
       recover some previously known extreme value results for the zeta function.
	\end{abstract}
	
    \keywords{Extreme values, the Riemann zeta function, resonance method, harmonic points. }
	
	\subjclass[2020]{Primary 11M06, 11N37.}
	
	\maketitle

\section{Introduction}
The Riemann zeta function $\zeta(s)$ is one of the most important objects in analytic number theory, due to its close connection with the distribution of prime numbers. The study of the behavior of its extreme values has always been a valuable research topic.
\par
In 1972, Levinson's work \cite{levinson1972omega} refined the earlier results of Littlewood and showed that for sufficiently large $T$,  
$$
\max_{t \in [1,T]}|\zeta(1+it)| \ge e^\gamma \log_2 T + O(1),
$$
where $\gamma$ is the Euler-Mascheroni constant. Here and throughout, we write $\log_k$ for the $k$-th iterated logarithm. More than three decades later, Granville and Soundararajan \cite{Gran06extreme} further improved Levinson’s result by refining the lower bound and investigating the frequency of large values. They also proposed a conjecture regarding the true order of the maximum of $|\zeta(1+it)|$. Up to now, the sharpest lower bound was established by Aistleitner, Mahatab, and Munsch \cite{aistleitner2019extreme}. Using the long resonance method, they showed that for all sufficiently large $T$, there exists a constant $C$ such that

$$
\max_{t \in [\sqrt{T},T]} | \zeta(1+it)| \ge e^\gamma (\log_2 T + \log_3 T -C).
$$
\par
In the case of the real part $\sigma \in ( 1/2,1)$, Aistleitner \cite{aistleitner2016lower} refined earlier work of Montgomery \cite{montgomery1977extreme}, which relied on Diophantine approximation. Using the resonance method, Aistleitner \cite{aistleitner2016lower} proved that for $\sigma \in ( 1/2,1)$ fixed and sufficiently large $T$, 
$$
\max_{t \in[0,T]}|\zeta(\sigma+it)| \ge \exp \bigg(C_\sigma \frac{(\log T)^{1-\sigma}}{(\log_2 T)^\sigma} \bigg),
$$
where $C_\sigma = 0.18(2\sigma-1)^{1-\sigma}$. Subsequently, the celebrated work of Bondarenko and Seip \cite{bondarenko2018note} provided a thorough investigation into the behavior of extreme values of $\zeta(s)$ within the critical strip. Specifically, they showed that if $T$ is sufficiently large, then for $1/2 + 1/\log_2 T \le \sigma \le 3/4$,
$$
\max_{t \in [\sqrt{T},T]} |\zeta(\sigma+it)| \ge \exp \bigg(\nu(\sigma) \frac{(\log T)^{1-\sigma}}{(\log_2 T)^\sigma}\bigg),
$$
and for $3/4 \le \sigma \le 1-1/\log_2T$,
$$
\max_{t \in [T/2,T]} |\zeta(\sigma+it)| \ge \log_2T\exp \bigg(c+ \nu(\sigma) \frac{(\log T)^{1-\sigma}}{(\log_2 T)^\sigma}\bigg),
$$
with $c$ an absolute constant independent of $T$. Here, $\nu(\sigma)$ is a continuous function bounded below by $1/(2-2\sigma)$. Furthermore, it has the following asymptotic behavior:
$$
\nu(\sigma)=
\begin{cases}
(1-\sigma)^{-1}+O(|\log(1-\sigma)|), & \sigma \nearrow 1 \\
(1/\sqrt{2}+o(1))\sqrt{|\log(2\sigma-1)|}, & \sigma \searrow 1/2.
\end{cases}
$$
More recently, the first author of this paper unconditionally refined the work of Bondarenko and Seip \cite{bondarenko2018note} for any fixed $\sigma \in (0.78,1)$. Moreover, assuming the Riemann Hypothesis (RH), sharper lower bounds valid throughout the entire critical strip were obtained; see \cite{qiyu2024large}. In fact, the maximum of $|\zeta(\sigma+it)|$ was already predicted by Lamzouri based on a probabilistic model; see \cite[pp. 5453-5454]{lamzouri2011distribution}. Furthermore, extreme values of derivatives of $\zeta(s)$ on the $1$-line have also been established; see \cite{dong2023note,yang2022extreme}.
\par
In this paper, we are interested in joint extreme values of $\zeta(s)$ at harmonic points, namely at points of the form
$$
s=\sigma+it, \sigma+2it, \dots, \sigma+\ell it.
$$ 
Henceforth, we let $\ell \in \mbz^+$ be fixed, where $\mbz^+$ is the set of all positive integers. Through the refined estimation of the coefficients $d_\ell(n)$ of the Dirichlet series associated with $\zeta^\ell(s)$, Levinson \cite{levinson1972harmonic} proved that there exist arbitrarily large $T$ such that for $\ell \ge 2$,
$$
\prod_{j=1}^\ell |\zeta(1+ijt)| \ge (e^\gamma\log_2T)^\ell \Big(1+O\Big(\frac{\ell \log \ell}{\log_2T}\Big)\Big).
$$
While this result establishes the correct leading order for joint extreme values at harmonic points, it does not capture finer secondary terms. In particular, when $\ell = 1$, Aistleitner et al. \cite{aistleitner2019extreme} improved upon Levinson's result \cite{levinson1972omega} by incorporating additional logarithmic contributions. It is therefore natural to ask whether the lower bound can be further sharpened to reflect such secondary terms. Our first theorem provides such a refinement.
\begin{thm}
\label{thm1}
Let $T$ be sufficiently large. We have 
$$
\max_{t \in [\sqrt{T},T]}\prod_{j=1}^\ell |\zeta(1+ijt)| \ge e^{\ell \gamma} \big\{(\log_2 T)^\ell + \ell (\log_2 T)^{\ell -1}\log_3 T + O_\ell\big((\log_2 T)^{\ell -1}\big) \big\},
$$
where the $O$-term depends only on $\ell$.
\end{thm}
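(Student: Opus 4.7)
\emph{Proof plan.} My plan is to adapt the long resonance method of Aistleitner--Mahatab--Munsch~\cite{aistleitner2019extreme} (which handles the case $\ell=1$) to the joint setting. The extra input needed is a careful analysis of the Dirichlet coefficients of the product $\prod_{j=1}^\ell\zeta_X(1+ijt)$, where $\zeta_X(s)=\prod_{p\le X}(1-p^{-s})^{-1}$ denotes the truncated Euler product.

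First, by a mean-square comparison analogous to the one carried out in \cite{aistleitner2019extreme}, I would reduce the problem to lower bounding $\max_{t\in[\sqrt{T},T]}\prod_{j=1}^\ell|\zeta_X(1+ijt)|$ for a suitable $X$, e.g.\ a small power of $\log T$. Second, I introduce a long resonator
\[
R(t)\;=\;\sum_{m\in\mathcal{M}}r(m)\,m^{-it},
\]
where $\mathcal{M}$ consists of integers whose prime factors lie in $\mathcal{P}=\{p:e^{c_1}\le p\le c_2\log T\log_2 T\}$, and $r(p^k)=\alpha_p^k$ is of multiplicative shape (with a truncation that keeps $|\mathcal{M}|\le T^\theta$ for some $\theta<1$), together with a smooth non-negative weight $\Phi(t/T)$ localized to $[\sqrt{T},T]$ with rapidly decaying Fourier transform.

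The central calculation is the resonance ratio
\[
I\;:=\;\frac{\int\prod_{j=1}^\ell\zeta_X(1+ijt)\,|R(t)|^2\,\Phi(t/T)\,dt}{\int|R(t)|^2\,\Phi(t/T)\,dt},
\]
which is a lower bound for $\sup_{t\in\mathrm{supp}\,\Phi(\cdot/T)}\prod_j|\zeta_X(1+ijt)|$. Opening $\prod_j\zeta_X(1+ijt)=\sum_N a_\ell(N)N^{-it}$ (whose local factor at $p$ is $\prod_{j=1}^\ell(1-p^{-1-ijt})^{-1}$) and $|R(t)|^2=\sum_{m,n}r(m)r(n)(n/m)^{it}$, and retaining only the diagonal $Nn=m$ forced by the Fourier support of $\Phi$, the ratio factorizes over $p\in\mathcal{P}$; at each prime the local factor reduces to $\prod_{j=1}^\ell(1-\alpha_p^j/p)^{-1}$. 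Letting $\alpha_p\to 1$ and applying Mertens' theorem over $\mathcal{P}$ yields
\[
I\;\ge\;\bigl(e^\gamma(\log_2 T+\log_3 T+O(1))\bigr)^\ell(1+o(1)),
\]
and a binomial expansion produces $e^{\ell\gamma}\{(\log_2 T)^\ell+\ell(\log_2 T)^{\ell-1}\log_3 T+O_\ell((\log_2 T)^{\ell-1})\}$, as required.

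The principal obstacle is the joint treatment of the off-diagonal terms together with the truncation of $\alpha_p^k$: to keep $|\mathcal{M}|\le T^\theta$ while still realizing the full factor $(1-1/p)^{-\ell}$ at every prime in the long range $\mathcal{P}$ demands GCD-sum estimates of Bondarenko--Seip type, suitably adapted to the joint setting. A genuinely new issue compared with \cite{aistleitner2019extreme} is that the Dirichlet coefficients $a_\ell(p^m)$ are no longer supported only at $m\in\{0,1\}$ but involve all partitions $k_1+2k_2+\cdots+\ell k_\ell=m$ with weight $p^{-(k_1+\cdots+k_\ell)}$; checking that these non-squarefree contributions assemble (in the limit $\alpha_p\to 1$) to precisely $(1-1/p)^{-\ell}$ is what generates the $\ell$-th power in the main term, and controlling the resulting longer Dirichlet series in the off-diagonal analysis is the most delicate technical step.
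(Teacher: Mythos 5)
Your plan is correct in outline and identifies the right local factor $\prod_{j=1}^\ell(1 - r(p)^j/p)^{-1}$, which is indeed the heart of the matter; but your technical set-up departs from the paper's in two ways that cause the ``principal obstacles'' you anticipate, neither of which actually arise in the paper's proof.

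First, the resonator. You propose a finite Dirichlet polynomial $R(t)=\sum_{m\in\mathcal{M}}r(m)m^{-it}$ with $|\mathcal{M}|\le T^\theta$, in the spirit of the short (Soundararajan/Bondarenko--Seip-style) resonance method. The paper, following Aistleitner--Mahatab--Munsch, uses instead the full Euler product
\[
R(t)=\prod_{p\le X}\bigl(1-r(p)p^{it}\bigr)^{-1}=\sum_{n=1}^{\infty}r(n)n^{it},
\qquad r(p)=1-p/X,\quad X=\tfrac{1}{6}\log T\log_2 T,
\]
which is an \emph{infinite} series. No cardinality bound $|\mathcal{M}|\le T^\theta$ is imposed; instead one shows the \emph{pointwise} bound $|R(t)|^2\le T^{1/3+o(1)}$ directly from $\log|R(t)|\le\sum_{p\le X}\log(X/p)\sim X/\log X$. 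Truncating $\mathcal{M}$ to control its size is precisely what damages the constant and what you would then need GCD-sum technology to recover; the paper simply does not truncate.

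Second, the weight. You write that the diagonal ``$Nn=m$ [is] forced by the Fourier support of $\Phi$,'' which suggests a compactly supported $\widehat{\Phi}$. The paper takes $\Phi(y)=e^{-y^2/2}$, whose Fourier transform is \emph{everywhere positive} (not compactly supported). Positivity is what supplies the lower bound, because every cross term in
\[
I_2 = \sum_{k_1,\dots,k_\ell}a_{k_1}\cdots a_{k_\ell}\sum_{m,n}r(m)r(n)\int_{-\infty}^{\infty}\Bigl(\frac{m}{k_1k_2^2\cdots k_\ell^\ell\, n}\Bigr)^{it}\Phi\Bigl(\frac{t\log T}{T}\Bigr)\,dt
\]
is nonnegative, so one may freely discard all $m$ not divisible by $k_1k_2^2\cdots k_\ell^\ell$ and substitute $m=k_1k_2^2\cdots k_\ell^\ell d$. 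That immediately yields
\[
\frac{I_2}{I_1}\;\ge\;\prod_{j=1}^{\ell}\sum_{k_j\ge1}a_{k_j}r(k_j)^j
\;=\;\prod_{j=1}^{\ell}\prod_{p\le X}\Bigl(1-\frac{r(p)^j}{p}\Bigr)^{-1},
\]
with no off-diagonal analysis, no GCD sums, and no need to track which $p$-power coefficients $a_\ell(p^m)$ are nonzero. The ``non-squarefree'' structure you flag as delicate is dissolved by the observation that $\prod_j\zeta(1+ijt;X)$ factorizes into $\ell$ separate Dirichlet series indexed by $k_1,\dots,k_\ell$, and complete multiplicativity of $r$ makes $r(k_1k_2^2\cdots k_\ell^\ell d)=r(k_1)r(k_2)^2\cdots r(k_\ell)^\ell r(d)$.

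With these adjustments your concluding Mertens computation is the paper's: the main factor $\prod_{j,p}p/(p-1)$ gives $e^{\ell\gamma}\bigl((\log_2T)^\ell+\ell(\log_2T)^{\ell-1}\log_3T+O_\ell((\log_2T)^{\ell-1})\bigr)$, while the correction $\prod_{j,p}(p-1)/(p-r(p)^j)\ge 1+O(1/\log_2T)$ absorbs the fact that $r(p)<1$ (your ``$\alpha_p\to 1$'' step, which otherwise needs an explicit error estimate since you cannot literally set $\alpha_p=1$). So your proposal would reach the same conclusion, but along a materially harder path; the key insight you are missing is that a Gaussian $\Phi$ (positive $\widehat{\Phi}$) plus a full Euler-product resonator let you avoid \emph{all} of the off-diagonal/GCD-sum difficulties you correctly identify as the bottleneck of the truncated route.
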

\begin{rem}
Here, we improve upon the above result of Levinson \cite{levinson1972harmonic} and obtain a secondary term $\ell(\log_2T)^{\ell-1}\log_3T$. It is worth noting that when $\ell=1$, the extreme value result coincides with that of Aistleitner et al. \cite{aistleitner2019extreme}.
\end{rem}
\par
The preceding theorem concerns joint extreme values on the line $\sigma = 1$. It is therefore natural to consider the corresponding problem in the critical strip, where the behavior of $\zeta(s)$ depends crucially on $\sigma$. The following theorem treats this case.
\begin{thm}
\label{thm2}
Let $\beta \in (0,1)$ be fixed, and let $T$ be sufficiently large. Then for $1/2 < \sigma < 1$, we have 
$$
\max_{t \in [T^\beta,T]}\prod_{j=1}^\ell |\zeta(\sigma+ijt)| \ge \exp\bigg\{ \kappa^{1-\sigma} \big(S(\sigma,\ell)+o_\sigma(1)\big)\frac{(\log T)^{1-\sigma}}{(\log_2 T)^\sigma}\bigg\}.
$$
Here, the $o$-term depends only on $\sigma$, and $S(\sigma,\ell)$ is an explicit constant defined by
\begin{equation*}
    S(\sigma,\ell) = \frac{\ell}{1-\sigma} + \sum_{m=1}^\ell (-1)^m \binom{\ell+1}{m+1} \frac{1}{1+\sigma(m-1)}.
\end{equation*}
And $\kappa$ is a positive real number satisfying 
$$
\kappa < \frac{1}{\sigma(1+c(\sigma))} \min \Big(1-\beta, \frac{\sigma-1/2}{7/4-\sigma/2}\Big),
$$
where $c(\sigma) \coloneqq \int_ 0^1 \mathrm{d}t/(2t^{-\sigma}-1)$.
\end{thm}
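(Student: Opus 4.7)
The proof proceeds by the resonance method, in the spirit of Aistleitner \cite{aistleitner2016lower} and Bondarenko--Seip \cite{bondarenko2018note}, adapted to the joint product $\prod_{j=1}^\ell \zeta(\sigma+ijt)$. Fix a smooth nonnegative weight $\Phi$ whose rescaling $\Phi(\cdot/T)$ is essentially supported in $[T^\beta, T]$, a resonator $R(t) = \sum_{n \le N} r(n) n^{-it}$ with $r \ge 0$ and length $N = T^\kappa$, and set
\begin{equation*}
M_1 := \int_{\mbr} \prod_{j=1}^\ell \zeta(\sigma+ijt)\, |R(t)|^2 \Phi(t/T)\, dt, \qquad M_2 := \int_{\mbr} |R(t)|^2 \Phi(t/T)\, dt.
\end{equation*}
The upper bound $|M_1| \le \bigl(\max_{t \in [T^\beta,T]} \prod_j |\zeta(\sigma+ijt)|\bigr) M_2$ reduces the theorem to a suitable lower bound on $|M_1|/M_2$.

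Expand $\prod_{j=1}^\ell \zeta(\sigma+ijt) = \sum_{n \ge 1} a_\sigma(n) n^{-it}$, where $a_\sigma$ is the multiplicative function determined by $\sum_{k \ge 0} a_\sigma(p^k) z^k = \prod_{j=1}^\ell (1-p^{-\sigma}z^j)^{-1}$; equivalently $a_\sigma(n) = \sum_{k_1 k_2^2 \cdots k_\ell^\ell = n}(k_1 \cdots k_\ell)^{-\sigma}$. Choose $R$ with a multiplicative coefficient profile $r(n) = \prod_p r_p(v_p(n))$, supported on integers smooth over a prime set $\mcp = (P_0, P_1]$. Under a geometric-type local ansatz $r_p(a) = f(p)^a$ (suitably truncated so the support lies in $[1,N]$), the diagonal contribution to $|M_1|/M_2$ factors over primes and reduces each local factor to $\prod_{j=1}^\ell (1-p^{-\sigma}f(p)^j)^{-1}$ at leading order, yielding the key inequality
\begin{equation*}
\log \frac{|M_1|}{M_2} \gtrsim \sum_{p \in \mcp} \sum_{j=1}^\ell \log \frac{1}{1-p^{-\sigma}f(p)^j}.
\end{equation*}
The global optimization, subject to the length constraint $\sum_p L_p \log p \le \kappa \log T$, is carried out by parametrizing $u = \log p / \log P_1 \in [0,1]$, converting the prime sum to a density integral via the prime number theorem, and tuning the profile $f$ via a Lagrange-multiplier step (whose normalization produces the factor $(1+c(\sigma))^{-1}$ in $\kappa$, with $c(\sigma) = \int_0^1 dt/(2t^{-\sigma}-1)$). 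The extremal value is a one-dimensional integral which, after binomial expansion of $(1-t^\sigma)^{\ell+1}$, is recognized as the explicit expression for $S(\sigma,\ell)$ in the statement.

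Two technical inputs pin down the constraints on $\kappa$. The off-diagonal part of $M_1$ is controlled by the rapid decay of $\hat\Phi$, which requires $N$ times the effective support of $a_\sigma$ to be at most $T^{1-\epsilon}$; this yields $\kappa < 1-\beta$. The replacement of $\prod_j \zeta(\sigma+ijt)$ by a short Dirichlet polynomial on a large measure subset of $[T^\beta,T]$ relies on a mean-value estimate for the $2\ell$-th moment of $|\zeta(\sigma+it)|$, whose window of validity gives $\kappa < (\sigma-1/2)/(7/4-\sigma/2)$. The main obstacle will be the global optimization: pinning down the precise extremal profile $f$ and identifying the resulting one-dimensional integral with the combinatorial expression for $S(\sigma,\ell)$. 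The $\ell$-fold coupling in $\prod_{j=1}^\ell (1-p^{-\sigma}z^j)^{-1}$ introduces combinatorial structure (the binomials $\binom{\ell+1}{m+1}$) that is trivial when $\ell=1$ but requires careful tracking through the Euler product and the Lagrange step in the present setting.
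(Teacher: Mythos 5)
Your high-level plan (multiplicative resonator, positivity of $\widehat{\Phi}$, reduction to a diagonal Euler product whose binomial expansion yields $S(\sigma,\ell)$) correctly captures the skeleton of the paper's argument, and your derivation that the local factor is a short product $\prod_{j=1}^{\ell}(1-p^{-\sigma}f(p)^j)^{-1}$ is on target. There are, however, two places where your proposal departs from what is actually needed and where the argument as written would not close.

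First, for $1/2<\sigma<1$ the Dirichlet series expansion $\prod_{j}\zeta(\sigma+ijt)=\sum_{n}a_\sigma(n)n^{-it}$ is not absolutely convergent, so resonating directly against this full series (your $M_1$) is not legitimate; one must first replace the product by a short Dirichlet object valid on most of $[T^\beta,T]$. You invoke a mean-value estimate for the $2\ell$-th moment of $|\zeta(\sigma+it)|$ for this replacement, but that is the wrong tool: the quantity here is a joint product over harmonics $\sigma+ijt$, not a power of a single value, and in any case a moment bound would not produce the constant $(\sigma-1/2)/(7/4-\sigma/2)$. What the paper actually does is truncate the Euler product at $Y=(\log T)^{2026/(\sigma-1/2)}$ using a Granville--Soundararajan formula for $\log\zeta(\sigma+it)$, applicable once a rectangle is zero-free, with the exceptional set $\mathcal{E}$ controlled by Ingham's zero-density estimate $N(\sigma_0,T)\ll T^{3(1-\sigma_0)/(2-\sigma_0)+o(1)}$ at $\sigma_0=\sigma/2+1/4$; the requirement $\operatorname{meas}(\mathcal{E})\cdot|R|_\infty^2\ll I_1$ is precisely what manufactures $\kappa\sigma(1+c(\sigma))<(\sigma-1/2)/(7/4-\sigma/2)$. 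Furthermore, once the truncation is in place, the paper works with the \emph{logarithm} $\operatorname{Re}\sum_{j}\sum_{p\le Y}p^{-\sigma-ijt}$ rather than with the zeta product itself, which linearizes the diagonal computation and avoids manipulating $a_\sigma$.

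Second, your attribution of the factor $1/(\sigma(1+c(\sigma)))$ to ``Lagrange-multiplier normalization'' is off. The paper makes no optimization step at all: it simply takes the explicit completely multiplicative choice $r(p)=1-(p/X)^\sigma$ with $X=\kappa\log T\log_2 T$, shows $|R(t)|^2\le T^{2\kappa\sigma+o(1)}$, and exploits the lower bound $I_1\gg T^{\kappa\sigma(1-c(\sigma))+1+o(1)}$ from the resonator's $L^2$-mass. The constraint $\kappa\sigma(1+c(\sigma))<1-\beta$ then drops out of comparing the off-diagonal error $T^{\beta+2\kappa\sigma+o(1)}$ against $I_1$; $c(\sigma)$ lives in the $L^2$-mass of the resonator, not in a multiplier step, and your version of the first constraint, a bare ``$\kappa<1-\beta$,'' is missing the $\sigma(1+c(\sigma))$ factor entirely. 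With the correct resonator and the correct truncation lemma in place, your expansion $\sum_{j=1}^\ell r(p)^j=\sum_{m=0}^{\ell}(-1)^m\binom{\ell+1}{m+1}(p/X)^{\sigma m}$ and a prime-sum application of the prime number theorem then do produce $S(\sigma,\ell)\,X^{1-\sigma}/\log X$ exactly as you anticipated.
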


\begin{rem}
    In Theorem \ref{thm2}, when $\sigma$ is close to $1/2$, the growth of $S(\sigma, \ell)$ with respect to $\ell$ is significantly suppressed due to the strong cancellation produced by the alternating binomial sum involving $\ell/(1-\sigma)$. As $\sigma$ approaches $1$, this phenomenon gradually disappears, and $S(\sigma, \ell)$ then increases notably with $\ell$.
\end{rem}

\begin{rem}
In Theorems \ref{thm1} and \ref{thm2}, the ranges in which joint extreme values occur are $[\sqrt{T},T]$ and $[T^\beta,T]$, respectively, for sufficiently large $T$ and fixed $\beta \in (0,1)$. In fact, in Theorem \ref{thm1}, the range $[\sqrt{T},T]$ can be replaced by $[T^\beta,T]$, at the cost that the $O$-term would then depend on $\ell$ and $\beta$. Since the main term remains unchanged, we choose $\beta =1/2$ to simplify the presentation of the proof. In contrast, in Theorem \ref{thm2}, the parameter $\kappa$ appearing in the main term depends on $\beta$, and the effect of $\beta$ cannot be neglected.
\end{rem}

In Theorem \ref{thm2}, the parameter $\kappa$ is required to be bounded by two terms, where the second term comes from the assumption on the zero-free region given in Lemma \ref{lem2}. Under RH, the formula in Lemma \ref{lem4} is valid throughout $t\in[T^{\beta},T]$. Consequently, the conditional joint extreme values of $\zeta(\sigma+it)$ in the critical strip are derived.

\begin{thm}
\label{thm3}
    Assuming RH. Let $\beta \in (0,1)$ be fixed, and let $T$ be sufficiently large. Then for $1/2 < \sigma < 1$, we have 
    $$
    \max_{t \in [T^\beta,T]}\prod_{j=1}^\ell |\zeta(\sigma+ijt)| \ge \exp\bigg\{ \kappa^{1-\sigma} \big(S(\sigma,\ell)+o(1)\big)\frac{(\log T)^{1-\sigma}}{(\log_2 T)^\sigma}\bigg\}.
    $$
    Here $\kappa$ is a positive real number less than $(1-\beta)/(\sigma(1+c(\sigma))$.
\end{thm}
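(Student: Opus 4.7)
The plan is to run the resonance method exactly as in the proof of Theorem~\ref{thm2}, with the sole substantive modification that the truncated Euler product approximation given by Lemma~\ref{lem4} is now invoked on the full range $t\in[T^\beta,T]$ rather than on the narrower range imposed by the unconditional zero-free region of Lemma~\ref{lem2}. Recall that the bound
$$
\kappa<\frac{1}{\sigma(1+c(\sigma))}\min\Big(1-\beta,\;\frac{\sigma-1/2}{7/4-\sigma/2}\Big)
$$
in Theorem~\ref{thm2} encodes two separate constraints: the factor $1-\beta$ measures the effective length over which the resonator detects extreme values, while the factor $(\sigma-1/2)/(7/4-\sigma/2)$ is the ceiling imposed by the classical zero-free region on the truncation parameter $X$ in the Euler product approximation. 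Under RH the second constraint disappears, leaving only $\kappa<(1-\beta)/(\sigma(1+c(\sigma)))$.

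First I would fix such a $\kappa$ and choose a truncation $X=T^{\eta}$ for a suitable $\eta$ admissible in the RH version of Lemma~\ref{lem4}, together with the same resonator
$$
R(t)=\sum_{n} r(n)\,n^{-it}
$$
as in Theorem~\ref{thm2}, whose coefficients $r(n)$ are supported on a multiplicative set of smooth integers optimizing the resonance ratio. Second, I would apply Lemma~\ref{lem4} to replace each factor $\zeta(\sigma+ijt)$, $1\le j\le\ell$, by its truncated Euler product $\prod_{p\le X}(1-p^{-\sigma-ijt})^{-1}$ with negligible error throughout $[T^\beta,T]$, and then expand everything into a Dirichlet series. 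The diagonal contribution of the ratio
$$
\frac{\int_{T^\beta}^{T}|R(t)|^{2}\prod_{j=1}^\ell\zeta(\sigma+ijt)\,dt}{\int_{T^\beta}^{T}|R(t)|^{2}\,dt}
$$
is evaluated exactly as in the proof of Theorem~\ref{thm2}, and optimization of $r$ produces the same explicit constant $S(\sigma,\ell)$ via the alternating binomial identity appearing in the statement.

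Third, the off-diagonal contributions are handled by the same mean-value argument used for Theorem~\ref{thm2}: since $|R(t)|^{2}$ can be arranged to be a short Dirichlet polynomial of length at most $T^{1-\beta-\varepsilon}$, standard orthogonality estimates render these terms negligible compared to the diagonal, provided $X$ lies in the range permitted by Lemma~\ref{lem4}.

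The main obstacle, though mostly bookkeeping, is to verify that every parameter of the Theorem~\ref{thm2} proof---the prime range in the resonator support, the size of the coefficients $r(n)$, and the truncation $X$---can be retuned to saturate the improved constraint on $\kappa$ without disturbing the mechanism that recovers $S(\sigma,\ell)$. Since the only change is the enlargement of the admissible range for $X$ granted by Lemma~\ref{lem4} under RH, and since the shape of the error term in Lemma~\ref{lem4} is unchanged, this reduces to checking that every step of the unconditional proof that exploited the smaller admissible range continues to go through on the larger one, which is routine.
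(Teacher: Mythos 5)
Your high-level plan is exactly the paper's: under RH the exceptional set $\mathcal{E}$ in Lemma~\ref{lem4} is empty, so the constraint on $\kappa$ coming from $\operatorname{meas}(\mathcal{E})$ drops out and only $\kappa < (1-\beta)/(\sigma(1+c(\sigma)))$ survives. Two technical details as you have written them would, however, break the argument if followed literally. You ``choose a truncation $X=T^{\eta}$''; in Section~\ref{sec3} the resonator parameter is $X=\kappa\log T\log_2 T$ and the Euler-product truncation is $Y=(\log T)^{2026/(\sigma-1/2)}$, and if the primes in the resonator's support extended up to a power of $T$, the pointwise bound $|R(t)|^2\le T^{2\kappa\sigma+o(1)}$ would fail and the comparison of $M_2/M_1$ with $I_2/I_1$ would collapse. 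You also claim the off-diagonal terms are controlled because ``$|R(t)|^2$ can be arranged to be a short Dirichlet polynomial of length at most $T^{1-\beta-\varepsilon}$''; that is the short resonator of Soundararajan, not the long resonator used here --- $r(n)$ is supported on all $X$-smooth integers with no length cap, and the errors are instead controlled by the sup-norm bound $|R(t)|^2\le T^{2\kappa\sigma+o(1)}$ together with the positivity of $\widehat{\Phi}$, not by orthogonality of a short Dirichlet polynomial. Both slips are easy to fix by carrying over the parameters of Section~\ref{sec3} unchanged, which is precisely what the paper's Section~\ref{sec4} does.
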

\begin{rem}
It is worth noting that, for fixed $\sigma \in (1/2,1)$, the range of conditional joint extreme values broadens as $\beta$ decreases, with the range of $\kappa$ increasing simultaneously. However, the order of magnitude in the exponent remains unchanged. This behavior is consistent with theoretical expectations. 
\end{rem}
\par
The study of joint extreme values at harmonic points reveals new structural features in the behavior of $\zeta(s)$. By examining the simultaneous growth of $\zeta(s)$ along homogeneous arithmetic progressions on the vertical line, we capture effects arising from the arithmetic structure of the Euler product. Specifically, the repeated occurrence of small primes $p$ across different $\zeta(\sigma+ijt)$ contributes significantly to this phenomenon. These results indicate that extreme values occur in a highly coordinated manner at different locations, leading to a deeper understanding of how extreme behavior propagates along harmonic directions. Moreover, the joint lower bounds extend the classical single-point phenomenon to a multi-point setting, thereby enriching extreme value theory and offering a more comprehensive description of the analytic behavior of $\zeta(s)$.
\par
It should be noted that $\ell$ in our main results is an arbitrary positive integer, in order to ensure that the truncation formulas \eqref{approximation1} and \eqref{approximation2} are valid.
\par
This paper is organized as follows. Section \ref{seclemma} presents some auxiliary lemmas. Theorems \ref{thm1} and \ref{thm2} are proved in Sections \ref{sec2} and \ref{sec3}, respectively. Finally, a short proof of Theorem \ref{thm3} is given in Section \ref{sec4}.

\section{Auxiliary lemmas}
\label{seclemma}
In this section, we introduce several lemmas. Define $\zeta(s;y)\coloneqq \prod_{p \le y}(1-p^{-s})^{-1}$. Now we state Lemma \ref{lem1}, which will be used to prove Theorem \ref{thm1}.
\begin{lem}[Aistleitner-Mahatab-Munsch \cite{aistleitner2019extreme}]
\label{lem1}
Let $T$ be sufficiently large, and set $Y = \exp \big( (\log T)^{10}\big)$. Then for $T^{1/10} \le |t| \le T$, we have
$$
\zeta(1+it) = \zeta(1+it;Y)\Big(1+O\Big(\frac{1}{\log T}\Big) \Big).
$$
\end{lem}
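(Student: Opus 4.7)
\medskip
\textit{Proof plan for Lemma \ref{lem1}.}
The plan is to bound the additive difference $\log\zeta(1+it) - \log\zeta(1+it;Y)$ and then exponentiate, since a bound of order $1/\log T$ on the difference immediately yields the stated multiplicative form via $e^{O(1/\log T)} = 1+O(1/\log T)$. Working first in the half-plane $\sigma>1$, where both logarithms admit absolutely convergent Dirichlet series, their difference is
$$
\log\zeta(s) - \log\zeta(s;Y) = \sum_{p>Y}\sum_{k\ge 1}\frac{1}{kp^{ks}}.
$$
The $k\ge 2$ portion is crudely absolutely bounded by $\sum_{p>Y}p^{-2}=O(1/Y)$ and is thus negligible for our huge $Y$. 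So the whole task reduces to estimating the $k=1$ tail $\sum_{p>Y}p^{-1-it}$ at $\sigma=1$.

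For this tail I would apply partial summation together with the prime number theorem in the classical form $\pi(u)=\mathrm{Li}(u)+O(u\exp(-c\sqrt{\log u}))$. Expressing $\sum_{Y<p\le X}p^{-1-it}$ as a Stieltjes integral against $d\pi$ and integrating by parts twice in $u$ to exploit the oscillation of $u^{-it}$, I expect to obtain
$$
\sum_{p>Y}p^{-1-it} \;=\; O\!\left(\frac{1}{|t|\log Y}\right) + O\!\left(|t|\sqrt{\log Y}\,e^{-c\sqrt{\log Y}}\right).
$$
Inserting $\log Y=(\log T)^{10}$ and the hypothesis $T^{1/10}\le|t|\le T$, the first error is at most $T^{-1/10}(\log T)^{-10}$ and the second is at most $T(\log T)^{5}e^{-c(\log T)^{5}}$; both are vastly smaller than $1/\log T$. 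Combined with the $O(1/Y)$ contribution from $k\ge 2$, this yields $\log\zeta(1+it)-\log\zeta(1+it;Y) = O(1/\log T)$, and exponentiating finishes the proof.

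The one subtlety I expect to be the main obstacle is crossing the abscissa of absolute convergence at $\sigma=1$: the identity for the difference was derived in the strip $\sigma>1$, while on $\sigma=1$ the $k=1$ tail does not converge absolutely. The fix is Abel's theorem for Dirichlet series, which permits the identification of the boundary value of the analytic function on the right with the conditionally convergent sum, provided the partial sums converge in the ordinary sense on $\sigma=1$; this convergence is itself exactly what the partial-summation step above establishes, since $(\pi(X)-\pi(Y))X^{-1-it}$ vanishes in magnitude as $X\to\infty$. No deeper input is needed: the generosity of the choice $\log Y=(\log T)^{10}$ means that the classical PNT error term is more than sufficient, and neither the Vinogradov--Korobov zero-free region nor a Perron-type contour shift is required.
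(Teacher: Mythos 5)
Your plan is essentially sound and does yield the lemma, though it travels by a route different from the one the paper follows (the paper itself cites the statement from Aistleitner--Mahatab--Munsch without proof, and its surrounding machinery, namely Lemma~\ref{lem2} of Granville--Soundararajan plus the zero-density Lemma~\ref{lem3}, is what it uses to prove the analogous truncation in the strip, Lemma~\ref{lem4}). Their method passes through a Perron-type contour shift into a zero-free or almost--zero-free region, whereas you bound the tail $\sum_{p>Y}p^{-1-it}$ head-on by Abel summation against $d\pi$ and the effective prime number theorem. Both ultimately rest on the same zero-free-region input; yours hides it inside the de la Vall\'ee Poussin error term rather than invoking it through a contour formula, which makes the argument self-contained if one takes PNT as a black box.

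Two small points where the sketch under-reports the work needed. First, a single integration by parts of $\int_Y^X u^{-1-it}\,d\mathrm{Li}(u)$ is not quite enough: after one pass you reproduce a copy of the original integral along with the boundary term and a new absolutely convergent integral, and it is only after solving for the original integral (equivalently, after the substitution $v=\log u$ and integrating by parts once on $\int_{\log Y}^{\log X} v^{-1}e^{-itv}\,dv$) that one lands on the clean bound $O(1/(|t|\log Y))$ plus an absolutely convergent remainder $O(1/(|t|\log Y))$. So ``integrating by parts twice'' is the right instinct, but the bookkeeping is slightly more delicate than stated. Second, the justification for crossing $\sigma=1$ needs more than the vanishing of the boundary term $(\pi(X)-\pi(Y))X^{-1-it}$: that alone does not give convergence, since the remaining integral $\int_Y^\infty \pi(u)\,d(u^{-1-it})$ is only conditionally convergent. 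What actually closes the gap is that the estimate $\sum_{p>Y}p^{-\sigma-it}=O(1/(|t|\log Y))+O(|t|\sqrt{\log Y}\,e^{-c\sqrt{\log Y}})$ you derive is \emph{uniform} for $\sigma\ge 1$, so one may let $\sigma\to 1^+$ in the identity $\log\zeta(\sigma+it)-\log\zeta(\sigma+it;Y)=\sum_{p>Y}\sum_{k\ge1}(kp^{k(\sigma+it)})^{-1}$ and use continuity of both sides at $\sigma=1$, $t\ne 0$ (no zero or pole there); invoking Abel's theorem is an alternative but is not the cleanest phrasing. With those two points tightened, the numerical check is exactly as you say: with $\log Y=(\log T)^{10}$ and $T^{1/10}\le|t|\le T$, both error terms and the $k\ge 2$ contribution $O(1/Y)$ are far smaller than $1/\log T$, and exponentiating gives the lemma.
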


The following lemma, established by Granville and Soundararajan \cite{Gran06extreme}, provides an important result concerning $\log \zeta(\sigma+it)$.

\begin{lem}[Granville-Soundararajan \cite{Gran06extreme}]
    \label{lem2}
    Let $y \ge 2$ and $|t| \ge y+3$ be real numbers. Let $1/2 \le \sigma_0 <1$ and suppose that the rectangle $\{s: \sigma_0 < \operatorname{Re}(s) \le 1, |\operatorname{Im}(s)-t|\le y+2 \}$ is free of zeros of $\zeta(s)$. Then for $\sigma_0 < \sigma \le 1$, we have 
    $$
    \log \zeta(\sigma+it) = \sum_{n=2}^y \frac{\Lambda(n)}{n^{\sigma+it}\log n}+ O\bigg(\frac{\log|t|}{|\sigma_1 - \sigma_0|^2}y^{\sigma_1-\sigma}\bigg),
    $$
    where $\sigma_1 =\min (\sigma_0 + (\log y)\inv,(\sigma+\sigma_0)/2)$.
\end{lem}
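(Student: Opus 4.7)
The plan is to obtain the truncated formula via a Perron-type contour shift, starting from the absolutely convergent expansion
$$
\log \zeta(s) = \sum_{n \ge 2} \frac{\Lambda(n)}{n^s \log n}, \qquad \operatorname{Re}(s) > 1.
$$
Writing $s = \sigma + it$ and choosing $c > \max(0, 1-\sigma)$, the weighted Perron identity yields
$$
\frac{1}{2\pi i}\int_{c-iU}^{c+iU} \log \zeta(s+w)\, \frac{y^w}{w}\, dw = \sum_{n \le y} \frac{\Lambda(n)}{n^s \log n} + O(\text{Perron truncation error}),
$$
and the goal is to re-evaluate the left-hand side by pushing the contour leftward through the zero-free region.

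I would then shift the contour to $\operatorname{Re}(w) = \sigma_1 - \sigma$, a vertical line lying strictly to the left of $w = 0$ since $\sigma_1 < \sigma$. By the hypothesis, $\log \zeta(s+w)$ is analytic throughout the rectangle being swept out: the condition $\sigma_1 > \sigma_0$ keeps $\operatorname{Re}(s+w)$ inside the zero-free strip, the assumption $|t| \ge y+3$ together with a choice $U \le y+2$ keeps the pole $w = 1 - s$ outside the rectangle, and the imaginary range $|\operatorname{Im}(w)| \le U$ stays within the zero-free rectangle assumed in the lemma. Cauchy's theorem then isolates the residue $\log \zeta(\sigma + it)$ at $w = 0$, together with contributions from the shifted vertical line and from two horizontal segments at $\operatorname{Im}(w) = \pm U$.

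To bound $\log \zeta$ on the shifted contour, the natural tool is the Borel-Carath\'eodory theorem, applied to $\log \zeta$ in a disk lying well inside the zero-free rectangle. Using the standard convexity estimate $\log|\zeta(s+w)| \ll \log|t|$ as an a priori real-part bound, and taking a disk whose radius is comparable to the horizontal distance $|\sigma_1 - \sigma_0|$, one obtains $|\log \zeta(s+w)| \ll \log|t|/|\sigma_1 - \sigma_0|$ on the line $\operatorname{Re}(w) = \sigma_1 - \sigma$. Multiplying by $|y^w| = y^{\sigma_1 - \sigma}$ and integrating against the $1/w$ kernel --- which contributes another factor of $1/|\sigma_1 - \sigma_0|$ after controlling the tails --- produces the claimed error $O(\log|t|\, y^{\sigma_1 - \sigma}/|\sigma_1 - \sigma_0|^2)$. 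Choosing $U$ of size comparable to $y$ also absorbs the Perron truncation and the horizontal-segment contributions into this same bound.

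The main obstacle I anticipate is pinning down the $|\sigma_1 - \sigma_0|^{-2}$ factor cleanly. This requires optimizing the Borel-Carath\'eodory disk so that its radius is genuinely of order $|\sigma_1 - \sigma_0|$, and then carefully integrating $1/w$ along the shifted line to tease out the second inverse factor without losing a stray $\log$. The particular choice $\sigma_1 = \min(\sigma_0 + 1/\log y,\, (\sigma + \sigma_0)/2)$ is precisely the compromise that keeps the contour safely inside the zero-free rectangle (ensuring the Borel-Carath\'eodory step is usable) while making $y^{\sigma_1 - \sigma}$ as small as possible given this constraint.
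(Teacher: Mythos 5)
This lemma is cited in the paper directly from Granville--Soundararajan \cite{Gran06extreme} without proof, so there is no in-paper argument to compare against. Your sketch -- a weighted Perron integral for $\sum_n \Lambda(n)/(n^s\log n)$, a contour shift to $\operatorname{Re}(w)=\sigma_1-\sigma$ staying inside the assumed zero-free rectangle, and a Borel--Carath\'eodory bound $|\log\zeta| \ll \log|t|/|\sigma_1-\sigma_0|$ there -- is in fact essentially the route taken in \cite{Gran06extreme}, so your proposal is faithful in spirit to the source.

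The one place you should tighten up is the bookkeeping for the factor $|\sigma_1-\sigma_0|^{-2}$, which you flag yourself as the main obstacle. Integrating $|y^w/w|$ along the shifted vertical line of height $U \asymp y$ does not literally produce a second factor of $|\sigma_1-\sigma_0|^{-1}$; it produces a logarithm $\log\bigl(U/|\sigma_1-\sigma|\bigr)$, since $\int_{-U}^{U} dv/\!\sqrt{(\sigma_1-\sigma)^2+v^2} \ll \log\bigl(1+U/|\sigma_1-\sigma|\bigr)$. The second inverse factor emerges only after using the specific definition of $\sigma_1$: because $\sigma_1 \le \sigma_0 + 1/\log y$ one has $\log y \le 1/(\sigma_1-\sigma_0)$, and because $\sigma_1 \le (\sigma+\sigma_0)/2$ one has $\sigma-\sigma_1 \ge \sigma_1-\sigma_0$, so together $\log\bigl(U/|\sigma_1-\sigma|\bigr) \ll \log y + \log\bigl(1/(\sigma_1-\sigma_0)\bigr) \ll 1/(\sigma_1-\sigma_0)$. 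So the mechanism is ``log converted into $1/(\sigma_1-\sigma_0)$ via the choice of $\sigma_1$'' rather than a direct contribution from the $1/w$ kernel; with that correction your outline closes correctly. You also need to say a word about where the Borel--Carath\'eodory disk is centred (a point with real part $>1$ so that $\log\zeta$ is small there) and to use the hypothesis $|t|\ge y+3$ together with $U\le y+2$ to keep the pole of $\zeta$ at $s+w=1$ outside the swept region, but you already gesture at both of these.
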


Note that Lemma \ref{lem2} requires the assumption that there are no zeros of $\zeta(s)$ in a certain range. Therefore, in order to apply this result effectively, it is necessary to combine it with the zero-density result for $\zeta(s)$. Let $N(\sigma,T)$ denote the number of zeros of $\zeta(s)$ within the rectangle $\{s: \operatorname{Re}(s) \ge \sigma, 0< \operatorname{Im}(s) \ge T \}$. The following classical result can be found in \cite{Ingham1940}.
\begin{lem}[Ingham \cite{Ingham1940}]
    \label{lem3}
    Let $1/2 \le \sigma \le 1$, we have
    $$
    N(\sigma,T) \ll T^{\frac{3(1-\sigma)}{2-\sigma}}(\log T)^5.
    $$
\end{lem}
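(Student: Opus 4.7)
The plan is to establish this classical zero-density estimate via Ingham's mollifier method. Introduce the mollifier $M_X(s) = \sum_{n \le X} \mu(n) n^{-s}$, a finite approximation to $1/\zeta(s)$, and set $g(s) := \zeta(s) M_X(s)$. Then $g$ vanishes at every nontrivial zero of $\zeta$, while
$$
g(s) - 1 = \sum_{n > X} a_X(n) n^{-s}, \qquad a_X(n) = \sum_{d \mid n,\, d \le X} \mu(d),
$$
has its Dirichlet coefficients supported on $n > X$. The strategy is to count the zeros of $\zeta$ with $\operatorname{Re}(s) \ge \sigma$ and $0 < \operatorname{Im}(s) \le T$ by reducing the count to a mean-value estimate for $g - 1$ on a vertical line with $\operatorname{Re}(s) = \sigma_0$ slightly to the left of $\sigma$.

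First, I would establish a local detector inequality. For each zero $\rho = \beta + i\gamma$ with $\beta \ge \sigma$, since $g(\rho) - 1 = -1$, the subharmonicity of $|g - 1|^2$ on a disk of radius $r = \beta - \sigma_0$ centred at $\rho$ yields a pointwise-to-integral bound of the form
$$
\int_{\gamma - 1}^{\gamma + 1} |g(\sigma_0 + it) - 1|^2 \, dt \gg 1,
$$
where $\sigma_0 := \sigma - c/\log T$ for a small constant $c$. Using the Riemann-von Mangoldt bound $N(T+1) - N(T) \ll \log T$ to extract from the collection of such zeros a subfamily whose ordinates are at least $2$ apart, at the cost of a factor $O(\log T)$, and summing the local bounds gives
$$
N(\sigma, T) \ll (\log T) \int_0^{T + O(1)} |g(\sigma_0 + it) - 1|^2 \, dt.
$$

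The core of the proof is then bounding this mean square. I would split the Dirichlet expansion at a truncation height $N = T^{1+\epsilon}$: the short piece $\sum_{X < n \le N}$ is handled directly by the Montgomery-Vaughan mean-value theorem, while the long piece is controlled by writing $g - 1 = (\zeta - \zeta_N) M_X + (\zeta_N M_X - 1)$ with $\zeta_N(s) = \sum_{n \le N} n^{-s}$. The residual $\zeta - \zeta_N$ is estimated via the approximate functional equation together with the fourth-moment bound $\int_0^T |\zeta(1/2 + it)|^4 \, dt \ll T (\log T)^4$, which is then transported to the line $\operatorname{Re}(s) = \sigma_0$ by Phragmen-Lindelof convexity. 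This fourth-moment input, together with the $\log T$ from the detector step, accounts precisely for the $(\log T)^5$ factor in the final bound. Optimizing $X$ as a power of $T$ to balance the contributions of the short and long pieces yields the exponent $3(1-\sigma)/(2-\sigma)$.

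The principal obstacle is the uniformity of the estimate in $\sigma \in [1/2, 1]$ and the tight tracking of the logarithmic losses. One must choose $\sigma_0 = \sigma - c/\log T$ so that the detector disk is large enough to capture the lower bound without pushing the auxiliary contour onto the critical line, and the fourth-moment and convexity steps must be deployed with enough care to produce an exponent of $T$ that is exactly $3(1-\sigma)/(2-\sigma)$ rather than a weaker value. The numerical balancing is delicate: any slack in the mean-value input, or in the choice of truncation height $N$, would propagate into a strictly inferior density estimate and spoil the stated bound.
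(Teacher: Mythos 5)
This lemma is not proved in the paper at all; it is quoted verbatim from Ingham's 1940 paper, and the authors simply cite it as a black box. Your proposal is therefore being compared with Ingham's original argument, not with anything written in this manuscript.

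Your outline is a faithful high-level reconstruction of the classical mollifier method: introduce $M_X(s)=\sum_{n\le X}\mu(n)n^{-s}$, observe that $g=\zeta M_X$ has $g(\rho)-1=-1$ at every nontrivial zero $\rho$, convert this pointwise information into a mean-square lower bound on a vertical line just to the left of $\sigma$, and feed in the fourth moment $\int_0^T|\zeta(\tfrac12+it)|^4\,dt\ll T(\log T)^4$ to reach the exponent $3(1-\sigma)/(2-\sigma)$ with five logarithms. This is indeed the standard route and the one Ingham used.

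One step is stated too loosely to compile as written: you invoke ``subharmonicity of $|g-1|^2$ on a disk of radius $r=\beta-\sigma_0$'' and conclude directly that $\int_{\gamma-1}^{\gamma+1}|g(\sigma_0+it)-1|^2\,dt\gg 1$. Subharmonicity of $|g-1|^2$ gives $1=|g(\rho)-1|^2\le\frac{1}{2\pi}\int_0^{2\pi}|g(\rho+re^{i\theta})-1|^2\,d\theta$, an average over a \emph{circle}, not an integral over a vertical segment. To pass to the line $\operatorname{Re}(s)=\sigma_0$ one must integrate over a range of radii to obtain an area average and then majorize the area integral by a supremum in $\sigma$ of the one-dimensional mean square; and with $r\asymp 1/\log T$ the induced normalizing factor $r^{-2}\asymp(\log T)^2$ has to be tracked carefully, or one loses two extra logarithms before the fourth moment is even introduced. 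Ingham's own detector (and the version in Titchmarsh, Chapter~9) is arranged precisely so that the log count comes out to five; your sketch acknowledges the delicacy but does not actually perform this bookkeeping. This is a real gap in the written argument, though a well-known and fixable one, not a conceptual error.

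Given that the paper treats the lemma as an external citation, the appropriate resolution is simply to cite Ingham (or Titchmarsh, \emph{The Theory of the Riemann Zeta-Function}, Theorem~9.19(A)) rather than to reprove it.
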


From Lemmas \ref{lem2} and \ref{lem3}, we obtain Lemma \ref{lem4} below, which provides a truncated formula for $\zeta(s)$ in the critical strip and plays an important role in the proof of Theorem \ref{thm2}.

\begin{lem}
    \label{lem4}
    Fix $\beta \in (0,1)$ Let $T$ be sufficiently large, and set $Y= (\log T)^{2026/(\sigma-1/2)}$. Then the truncated formula
    $$
    \zeta(\sigma+it) = \zeta(\sigma+it;Y)\Big(1+O\Big(\frac{1}{\log T}\Big) \Big)
    $$
    holds for all $t \in [T^\beta,T]\setminus\mce$. Here, we note that
    $$
    \operatorname{meas}(\mce) \ll T^{\frac{9/4-3\sigma/2}{7/4-\sigma/2}+o(1)}.
    $$
\end{lem}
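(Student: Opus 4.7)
The plan is to apply Lemma \ref{lem2} (Granville--Soundararajan) with a carefully chosen $\sigma_0$ and to use the zero-density estimate of Lemma \ref{lem3} to bound the exceptional set. The key calibration is to take $\sigma_0 = \sigma/2 + 1/4$, the midpoint of $1/2$ and $\sigma$. A direct computation then gives $1-\sigma_0 = 3/4-\sigma/2$ and $2-\sigma_0 = 7/4-\sigma/2$, so that $3(1-\sigma_0)/(2-\sigma_0) = (9/4-3\sigma/2)/(7/4-\sigma/2)$, which is exactly the target exponent for $\operatorname{meas}(\mce)$.

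Define $\mce$ as the set of $t \in [T^\beta,T]$ for which the rectangle $\{s : \sigma_0 < \operatorname{Re}(s) \le 1,\ |\operatorname{Im}(s)-t| \le Y+2\}$ contains a zero of $\zeta$. Each such zero forces $t$ to lie in an interval of length $2(Y+2)$, so
$$
\operatorname{meas}(\mce) \le 2(Y+2)\, N(\sigma_0,T) \ll Y (\log T)^5\, T^{3(1-\sigma_0)/(2-\sigma_0)}.
$$
Since $Y = (\log T)^{2026/(\sigma-1/2)} = T^{o(1)}$, the factors of $Y$ and $(\log T)^5$ are absorbed into $T^{o(1)}$, yielding the stated measure bound.

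For $t \in [T^\beta,T]\setminus\mce$, Lemma \ref{lem2} applies with $y=Y$. Choose $\sigma_1 = \sigma_0 + 1/\log Y$ (the first term realizes the minimum once $T$ is large). Then $\sigma_1-\sigma = -(\sigma/2-1/4) + 1/\log Y$, and the exponent $2026/(\sigma-1/2)$ in the definition of $Y$ is picked precisely so that
$$
Y^{\sigma_1-\sigma} \;=\; e\cdot (\log T)^{-1013},
$$
which crushes the prefactor $(\log|t|)/(\sigma_1-\sigma_0)^2 \ll (\log T)(\log Y)^2$ with vast margin. Consequently
$$
\log \zeta(\sigma+it) \;=\; \sum_{n=2}^{Y} \frac{\Lambda(n)}{n^{\sigma+it}\log n} \;+\; O\!\left(\frac{1}{\log T}\right).
$$
Finally, compare this Dirichlet sum with $\log \zeta(\sigma+it;Y) = \sum_{p\le Y}\sum_{k\ge 1} (kp^{k(\sigma+it)})^{-1}$: the discrepancy consists of tails $k > \log Y/\log p$ for each $p\le Y$, easily bounded by $O(Y^{-\sigma}\log\log Y) \ll 1/\log T$. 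Exponentiating delivers the claimed truncation formula.

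The main obstacle—and the reason the statement looks as it does—is the simultaneous calibration of the three parameters $\sigma_0$, $\sigma_1$, and the exponent of $\log T$ inside $Y$: $\sigma_0$ is forced by the requirement that the zero-density exponent matches $(9/4-3\sigma/2)/(7/4-\sigma/2)$, the choice $\sigma_1=\sigma_0+1/\log Y$ is needed so that $(\sigma_1-\sigma_0)^{-2}$ remains manageable, and the large exponent $2026/(\sigma-1/2)$ in $Y$ is tuned so that $Y^{\sigma_1-\sigma}$ overwhelms all $(\log T)^{O(1)}$ losses accumulated along the way. Once these are pinned down, the remainder of the argument is formal.
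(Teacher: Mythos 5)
Your proof follows essentially the same path as the paper: take $\sigma_0=\sigma/2+1/4$ in Lemma \ref{lem2}, use Ingham's zero-density bound (Lemma \ref{lem3}) to control $\operatorname{meas}(\mce)$, then convert the von Mangoldt sum into the truncated Euler product $\log\zeta(\sigma+it;Y)$. The calibration you describe (the identity $3(1-\sigma_0)/(2-\sigma_0)=(9/4-3\sigma/2)/(7/4-\sigma/2)$, the choice $\sigma_1=\sigma_0+1/\log Y$, and the computation $Y^{\sigma_1-\sigma}=e\,(\log T)^{-1013}$) matches the paper's intent. One small quantitative slip: the discrepancy between $\sum_{n\le Y}\Lambda(n)/(n^{\sigma+it}\log n)$ and $\log\zeta(\sigma+it;Y)$ equals $\sum_{k\ge2}\sum_{Y^{1/k}<p\le Y}(kp^{k(\sigma+it)})^{-1}$, whose correct size is $\ll Y^{1/2-\sigma}$ (the $k=2$, $p>\sqrt{Y}$ range dominates), not $O(Y^{-\sigma}\log\log Y)$ as you wrote; with $Y=(\log T)^{2026/(\sigma-1/2)}$ one still gets $(\log T)^{-2026}\ll 1/\log T$, so the conclusion stands. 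This is cosmetic and your argument is otherwise sound.
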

\begin{proof}
   Choose $y=Y= (\log T)^{2026/(\sigma-1/2)}$ and $\sigma_0 = \sigma/2+1/4$ in Lemma \ref{lem2}. Meanwhile, using Lemma \ref{lem3} and we have
   $$
    \log \zeta(\sigma+it) = \sum_{n=2}^Y \frac{\Lambda(n)}{n^{\sigma+it}\log n}+ O\Big(\frac{1}{(\log T)^{10}}\Big)
   $$
for all $t \in [T^\beta,T]\setminus\mce$ with $\operatorname{meas}(\mce) \ll YT^{3(1-\sigma_0)/(2-\sigma_0)+o(1)}$. By the definition of $\Lambda(n)$, setting $n=p^k$ for $k \ge 1$ yields 
   $$
   \sum_{n=2}^Y \frac{\Lambda(n)}{n^{\sigma+it}\log n} = \sum_{p \le Y}\frac{1}{p^{\sigma+it}}+ \sum_{k \ge 2}\sum_{p^k \le Y}\frac{1}{kp^{k(\sigma+it)}}.
   $$
   This is because $\sigma \in (1/2,1)$ ensures that the terms with $k \ge 2$ are bounded.
   On the other hand, it is clear that
   $$
   \zeta(\sigma+it;Y) = \exp\Big(-\sum_{p \le Y}\log \Big(1-\frac{1}{p^{\sigma+it}}\Big) \Big) = \exp\Big(\sum_{p \le Y}\frac{1}{p^{\sigma+it}} +\sum_{k \ge 2}\sum_{p \le Y}\frac{1}{kp^{k(\sigma+it)}}\Big).
   $$
We observe that the sum
$$
\sum_{k \ge 2}\sum_{Y^{1/k} < p \le Y} \frac{1}{k p^{k\sigma}} = O \Big(\frac{1}{(\log T)^{10}} \Big)
$$
is sufficiently small by our choice of $Y$, so that $\zeta(\sigma+it)$ can be approximated well by $\zeta(\sigma+it;Y)$. This completes the proof of Lemma \ref{lem4}.
\end{proof}

\section{Proof of Theorem \ref{thm1}}
\label{sec2}
In this section, we shall present the proof of Theorem \ref{thm1} by applying the long resonance method developed in \cite{aistleitner2019extreme}.
 Set $Y = \exp \big( (\log T)^{10}\big)$. Using Lemma \ref{lem1}, the following asymptotic formula
\begin{equation}
    \label{approximation1}
    \prod_{j=1}^\ell\zeta(1+ijt) = \prod_{j=1}^\ell\zeta(1+ijt;Y)\Big(1+O\Big(\frac{1}{\log T}\Big) \Big)
\end{equation}
holds for all $t \in [\sqrt{T},T]$. Subsequently, we set
$
X = (\log T \log_2 T)/6,
$
and define $r(n)$ to be a completely multiplicative function whose values at primes are given by

\begin{equation*}
    r(p) =
    \begin{cases}
        1-\frac{p}{X}, ~ &\operatorname{if} p \le X, \\
		0, ~ &\operatorname{if} p > X.
    \end{cases}
\end{equation*}
Define 
$$
R(t) = \prod_{p \le X}\big(1-r(p)p^{it}\big)^{-1} = \sum_{n=1}^{\infty}r(n)n^{it}.
$$
Note that 
$$
\log |R(t)| \le \sum_{p \le X} \log \Big|\frac{1}{1-r(p)}\Big| = \sum_{p \le X}\log \frac{X}{p}.
$$
Using the prime number theory and the choice of $X$, it is easy to derive that
\begin{equation}
    \label{Rupper1}
    |R(t)|^2 \le T^{1/3+o(1)}.
\end{equation}
Moreover, the definition of $Y$ implies that
\begin{equation}
    \label{zetaYupper1}
    \prod_{j=1}^\ell |\zeta(1+ijt;Y)| \ll (\log Y)^\ell \ll T^{o(1)}.
\end{equation}
\par
Let $\Phi(y) \coloneqq e^{-y^2/2}$. The Fourier transform $\widehat{\Phi}$ satisfies $\widehat{\Phi}(\xi)=\sqrt{2\pi}\Phi(\xi) >0$ for all $\xi \in \mbr$.  Due to the exponential decay of $\Phi$, all sums and integrals associated with $\Phi$ that appear below are absolutely convergent. We then define the following two quantities:
\begin{align*}
    M_1 &\, \coloneqq M_1(R,T) = \int_{\sqrt{T}}^T |R(t)|^2 \Phi\Big(\frac{\log T}{T}t \Big)\mathrm{d}t, \\
    M_2 &\, \coloneqq M_2(R,T) = \int_{\sqrt{T}}^T \prod_{j=1}^\ell \zeta(1+ijt;Y) |R(t)|^2 \Phi\Big(\frac{\log T}{T}t \Big)\mathrm{d}t.
\end{align*}
Trivially, we have $\max_{t \in [\sqrt{T},T]}\prod_{j=1}^\ell |\zeta(1+ijt;Y)| \ge |M_2|/M_1$. To use the positivity of the Fourier transform of $\Phi$, we extend the integrals to $\mbr$. Naturally, considering the tails, we have
$$
\Big| \int_{|t|\le \sqrt{T}}  \prod_{j=1}^\ell \zeta(1+ijt;Y) |R(t)|^2\Phi\Big(\frac{\log T}{T}t \Big)\mathrm{d}t\Big| \ll T^{5/6+o(1)}
$$
by applying \eqref{Rupper1} and \eqref{zetaYupper1}. Furthermore, the rapid decay of $\Phi$ implies 
$$
\Big| \int_{|t|\ge T }  \prod_{j=1}^\ell \zeta(1+ijt;Y) |R(t)|^2 \Phi\Big(\frac{\log T}{T}t \Big)\mathrm{d}t\Big| \ll 1.
$$
Consequently, 
$$
2M_2= \int_{-\infty}^\infty \prod_{j=1}^\ell \zeta(1+ijt;Y) |R(t)|^2 \Phi\Big(\frac{\log T}{T}t \Big)\mathrm{d}t +O\big(T^{5/6+o(1)}\big).
$$
Similarly, setting $I_1\coloneqq I_1(R,T) =  \int_{-\infty}^\infty |R(t)|^2 \Phi (t\log T/T )\mathrm{d}t$, we have $2M_1 = I_1 + O\big(T^{5/6+o(1)}\big)$. 
Moreover, it follows from \cite[Eq. (8)]{aistleitner2019extreme} that
\begin{equation}
    \label{I1lower1}
    I_1 =\sum_{m,n=1}^\infty \int_{-\infty}^\infty r(m)r(n) \Big(\frac{m}{n} \Big)^{it}\Phi\Big(\frac{\log T}{T}t \Big)\mathrm{d}t  \gg T^{1+o(1)}.
\end{equation}
Noticing that $\widehat{\Phi}$ is always positive, we obtain that for $Y>X$,
$$
\int_{-\infty}^\infty \prod_{j=1}^\ell \zeta(1+ijt;Y) |R(t)|^2 \Phi\Big(\frac{\log T}{T}t \Big)\mathrm{d}t \ge \int_{-\infty}^\infty \prod_{j=1}^\ell \zeta(1+ijt;X) |R(t)|^2 \Phi\Big(\frac{\log T}{T}t \Big)\mathrm{d}t.
$$
Denote by $I_2 \coloneqq I_2(R,T)$ the integral on the right-hand side above. Then combining with \eqref{I1lower1} yields
\begin{equation}
    \label{max1}
    \max_{t \in [\sqrt{T},T]}\prod_{j=1}^\ell |\zeta(1+ijt;Y)| \ge \frac{|I_2|}{I_1}+O\big(T^{-1/6+o(1)}\big).
\end{equation}
\par
According to the definition of $r(n)$, we can write $\zeta(1+ijt;X)=\sum_{k_j=1}^\infty a_{k_j} {k_j}^{-ijt}$. Here, $a_{k_j}=1/k_j$ if all prime factors of $k_j$ are not exceeding $X$, and $a_{k_j}=0$ otherwise.
Expanding $\prod_{j=1}^\ell \zeta(1+ijt;X)$ and $|R(t)|^2$ in $I_2$, we obtain
$$
I_2 = \int_{-\infty}^\infty \sum_{k_1,k_2,\dots, k_\ell=1}^\infty a_{k_1}a_{k_2}\cdots a_{k_\ell}k_1^{-it}k_2^{-2it}\cdots k_{\ell}^{-\ell it} \sum_{m,n=1}^\infty r(m)r(n) \Big(\frac{m}{n} \Big)^{it}\Phi\Big(\frac{\log T}{T}t \Big)\mathrm{d}t.
$$
Since the order of summations and integrals can be interchanged freely, we obtain
$$
I_2 = \sum_{k_1,k_2,\dots, k_\ell=1}^\infty a_{k_1}a_{k_2}\cdots a_{k_\ell}\sum_{m,n=1}^\infty r(m)r(n)\int_{-\infty}^\infty k_1^{-it}k_2^{-2it}\cdots k_{\ell}^{-\ell it}\Big(\frac{m}{n} \Big)^{it}\Phi\Big(\frac{\log T}{T}t \Big)\mathrm{d}t.
$$
Using the fact that $\widehat{\Phi}$ is always positive, we can directly discard many terms in the second sum. Specifically, 
\begin{align*}
    I_2 & \, \ge \sum_{k_1,k_2,\dots, k_\ell=1}^\infty a_{k_1}a_{k_2}\cdots a_{k_\ell} \sum_{n=1}^\infty \sum_{\substack{m \ge 1 \\ k_1 k_2^2 \cdots k_\ell^\ell \mid m}} r(m)r(n) \int_{-\infty}^\infty \Big(\frac{m}{k_1 k_2^2 \cdots k_\ell^\ell n} \Big)^{it}\Phi\Big(\frac{\log T}{T}t \Big)\mathrm{d}t \\
    & \, = \sum_{k_1,k_2,\dots, k_\ell=1}^\infty a_{k_1}a_{k_2}\cdots a_{k_\ell}  \sum_{d,n=1}^\infty r(k_1 k_2^2 \cdots k_\ell^\ell d)r(n)\int_{-\infty}^\infty\Big(\frac{d}{n} \Big)^{it}\Phi\Big(\frac{\log T}{T}t \Big)\mathrm{d}t.
\end{align*}
Then, we have the following lower bound for $I_2$:
\begin{align*}
    I_2 &\, \ge \sum_{k_1,k_2,\dots, k_\ell=1}^\infty a_{k_1}a_{k_2}\cdots a_{k_\ell} r(k_1)r(k_2)^2\cdots r(k_\ell)^\ell \sum_{d,n=1}^\infty r(d)r(n)\int_{-\infty}^\infty\Big(\frac{d}{n} \Big)^{it}\Phi\Big(\frac{\log T}{T}t \Big)\mathrm{d}t \\
    &\, = I_1 \sum_{k_1,k_2,\dots, k_\ell=1}^\infty a_{k_1}a_{k_2}\cdots a_{k_\ell} r(k_1)r(k_2)^2\cdots r(k_\ell)^\ell.
\end{align*}
This follows from the fact that $r(n)$ is completely multiplicative, together with the definition of $I_1$ \eqref{I1lower1}. Therefore, we obtain 
\begin{equation}
    \label{I2I1ratio1}
    \frac{I_2}{I_1} \ge \sum_{k_1,k_2,\dots, k_\ell=1}^\infty a_{k_1}a_{k_2}\cdots a_{k_\ell} r(k_1)r(k_2)^2\cdots r(k_\ell)^\ell = \prod_{j=1}^\ell \Big( \sum_{k_j=1}^\infty a_{k_j} r(k_j)^j \Big).
\end{equation}
\par
For the inner sum on the right-hand side of \eqref{I2I1ratio1}, by the definition of $a(n)$ and $r(n)$, the sum vanishes whenever a prime divisor $p$ of $n$ exceeds $X$. Therefore, it can be written as a short Euler product $\sum_{k_j=1}^\infty a_{k_j} r(k_j)^j = \prod_{p \le X}(1-r(p)^j p^{-1})^{-1}$. Next, 
\begin{align*}
    \frac{I_2}{I_1} &\, \ge \prod_{j=1}^\ell\prod_{p \le X}\Big(1-\frac{r(p)^j}{p} \Big)^{-1} = \prod_{j=1}^\ell\prod_{p \le X} \Big(\frac{p}{p-1} \cdot \frac{p-1}{p-r(p)^j}\Big)\\
    &\, = \Big\{\prod_{j=1}^\ell\prod_{p \le X}\frac{p}{p-1} \Big\}\cdot\Big\{ \prod_{j=1}^\ell\prod_{p \le X} \frac{p-1}{p-r(p)^j} \Big\} \eqqcolon \mcp_1 \cdot \mcp_2.
\end{align*}
Then, we need to evaluate $\mcp_1$ and $\mcp_2$, respectively. First, Mertens' theorem shows that
$$
\prod_{p \le X}\frac{p}{p-1} = e^\gamma \log X +O(1) = e^\gamma(\log_2 T + \log_3 T)+O(1).
$$
Thus, we can compute $\mcp_1$ directly and get 
\begin{align}
      \label{p1}
    \mcp_1 &\, = \prod_{j=1}^\ell\big(e^\gamma(\log_2 T + \log_3 T)+O(1)\big) \nonumber\\ 
    &\, = e^{\ell \gamma}\big((\log_2 T)^\ell + \ell(\log_2 T)^{\ell-1}\log_3 T +O\big((\log_2 T)^{\ell-2}(\log_3 T)^2\big) \big).
\end{align}
On the other hand, for $\mcp_2$, we will show the following effective lower bound:
\begin{equation}
    \label{p2}
    \mcp_2 \ge 1 + O\Big(\frac{1}{\log_2 T} \Big).
\end{equation}
In fact, it is clear that
$$
\prod_{p \le X} \frac{p-1}{p-r(p)^j}  = \exp \Big( \sum_{p \le X} \log\frac{p-1}{p-r(p)^j}  \Big) =  \exp \Big( - \sum_{p \le X} \log \Big(1+ \frac{1-r(p)^j}{p-1}\Big)\Big).
$$
Noting that $r(p)^j = (1-p/X)^j \ge 1- jp/X$ for all $p \le X$, we use the prime number theory and obtain 
$$
\prod_{p \le X} \frac{p-1}{p-r(p)^j} \ge \exp \Big( - \sum_{p \le X} \frac{j}{X}\cdot \frac{p}{p-1}\Big) = \exp \Big( (-j+o(1))\frac{1}{\log X}\Big).
$$
Since $X = (\log T \log_2 T)/6$, we have $1/\log X \to 0$ as $T \to \infty$. Therefore, 
$$
\prod_{j=1}^\ell\prod_{p \le X} \frac{p-1}{p-r(p)^j}\ge \exp \Big( \Big(-\sum_{j=1}^\ell j+o(1)\Big)\frac{1}{\log X}\Big) = 1 + O\Big(\frac{1}{\log_2 T} \Big).
$$
This shows that \eqref{p2} holds.
\par
As a result, we get
\begin{equation}
    \label{i2i1}
    \frac{I_2}{I_1} \ge e^{\ell \gamma}\big((\log_2 T)^\ell + \ell(\log_2 T)^{\ell-1}\log_3 T +O\big((\log_2 T)^{\ell-1}\big) \big)
\end{equation}
by using \eqref{I2I1ratio1}, \eqref{p1} and \eqref{p2}. Finally, by substituting \eqref{i2i1} into \eqref{max1} and combining with \eqref{approximation1}, we prove Theorem \ref{thm1}.

\section{Proof of Theorem \ref{thm2}}
\label{sec3}

In accordance with the approach of Section \ref{sec2}, we require an approximation formula similar to \eqref{approximation1}. To this end, we set $Y= (\log T)^{2026/(\sigma-1/2)}$. Lemma \ref{lem4} yields the following approximation formula
\begin{equation}
    \label{approximation2}
        \prod_{j=1}^\ell\zeta(\sigma+ijt) = \prod_{j=1}^\ell\zeta(\sigma+ijt;Y)\Big(1+O\Big(\frac{1}{\log T}\Big) \Big),
\end{equation}
which holds for all $t \in [T^\beta,T]\setminus \mce$. Here,  the measure of $\mce$ satisfies
\begin{equation}
    \label{measmce}
    \operatorname{meas}(\mce) \ll T^{\frac{9/4-3\sigma/2}{7/4-\sigma/2}+o(1)}.
\end{equation}
\par
We set $X = \kappa \log T \log_2 T$ with $k>0$ to be chosen later, then we let $r(n)$ be a completely multiplicative function with values at primes given by
\begin{equation*}
    r(p) =
    \begin{cases}
        1-(\frac{p}{X})^\sigma, ~ &\operatorname{if} p \le X, \\
		0, ~ &\operatorname{if} p > X.
    \end{cases}
\end{equation*}
Define the resonator
$$
R(t) = \prod_{p \le X}\big(1-r(p)p^{it}\big)^{-1} = \sum_{n=1}^{\infty}r(n)n^{it}.
$$
Combining the choice of $X$ and the prime number theory, we have
\begin{equation}
    \label{Rupper2}
    |R(t)|^2 \le T^{2\kappa\sigma+o(1)}.
\end{equation}
Note that
$$
\zeta(\sigma+ijt;Y)=\prod_{p \le Y}\big(1-p^{-(\sigma+ijt)} \big)^{-1} = \exp \Big(\sum_{p \le Y}\frac{1}{p^{\sigma+ijt}}+O(1) \Big).
$$
Thus, we obtain that
\begin{equation}
    \label{max2}
    \max_{t \in [T^\beta,T]}\prod_{j=1}^\ell |\zeta(\sigma+ijt;Y)| \gg \exp \Big( \max_{t \in [T^\beta,T]}\Big( \operatorname{Re}\sum_{j=1}^\ell  \sum_{p \le Y}\frac{1}{p^{\sigma+ijt}}\Big) \Big).
\end{equation}
Furthermore, it is easy to show
\begin{equation}
    \label{zetaYupper2}
    \sum_{j=1}^\ell \sum_{p \le Y}\frac{1}{p^{\sigma+ijt}} \ll T^{o(1)}.
\end{equation}
\par
Now, we define the following four integrals
\begin{align*}
    M_1 &\, \coloneqq M_1(R,T) = \int_{T^\beta}^T |R(t)|^2 \Phi\Big(\frac{\log T}{T}t \Big)\mathrm{d}t, \\
    M_2 &\, \coloneqq M_2(R,T) = \int_{T^\beta}^T \Big(\operatorname{Re}\sum_{j=1}^\ell \sum_{p \le Y}\frac{1}{p^{\sigma+ijt}}\Big) |R(t)|^2 \Phi\Big(\frac{\log T}{T}t \Big)\mathrm{d}t, \\
    I_1 &\, \coloneqq I_1(R,T) = \int_{-\infty}^\infty |R(t)|^2 \Phi\Big(\frac{\log T}{T}t \Big)\mathrm{d}t, \\
    I_2 &\, \coloneqq I_2(R,T) = \int_{-\infty}^\infty \Big(\operatorname{Re}\sum_{j=1}^\ell \sum_{p \le Y}\frac{1}{p^{\sigma+ijt}}\Big) |R(t)|^2 \Phi\Big(\frac{\log T}{T}t \Big)\mathrm{d}t.
\end{align*}
Here, $\Phi(y) = e^{-t^2/2}$ coincides with that in Section \ref{sec2}. Plainly, 
\begin{equation}
    \label{max3}
    \max_{t \in [T^\beta,T]}\Big( \operatorname{Re}\sum_{j=1}^\ell  \sum_{p \le Y}\frac{1}{p^{\sigma+ijt}}\Big) \ge \frac{M_2}{M_1}.
\end{equation}
Using \eqref{Rupper2} and \eqref{zetaYupper2}, we have
$$
\bigg|\int_{|t| < T^\beta} \Big(\operatorname{Re}\sum_{j=1}^\ell \sum_{p \le Y}\frac{1}{p^{\sigma+ijt}}\Big) |R(t)|^2 \Phi\Big(\frac{\log T}{T}t \Big)\mathrm{d}t \bigg| \le T^{\beta + 2\kappa\sigma+o(1)}.
$$
Meanwhile, as $\Phi$ decays rapidly, it follows that
$$
\bigg|\int_{|t|>T} \Big(\operatorname{Re}\sum_{j=1}^\ell \sum_{p \le Y}\frac{1}{p^{\sigma+ijt}}\Big) |R(t)|^2 \Phi\Big(\frac{\log T}{T}t \Big)\mathrm{d}t \bigg| \ll 1.
$$
The two upper bounds above imply $2M_2=I_2+O(T^{\beta + 2\kappa\sigma+o(1)})$. Similarly, we have $2M_1=I_1+O(T^{\beta + 2\kappa\sigma+o(1)})$. Moreover, we will use the lower bound for $I_1$. We find
\begin{equation}
    \label{I1lower2}
    I_1 =\sum_{m,n=1}^\infty \int_{-\infty}^\infty r(m)r(n) \Big(\frac{m}{n} \Big)^{it}\Phi\Big(\frac{\log T}{T}t \Big)\mathrm{d}t  \gg T^{\kappa\sigma(1-c(\sigma))+1+o(1)}
\end{equation}
in \cite[pp. 78-79]{Dong2022phd}, where $c(\sigma) \coloneqq \int_ 0^1 \mathrm{d}t/(2t^{-\sigma}-1)$.
Assuming 
\begin{equation}
    \label{kappa1}
    \beta + 2\kappa\sigma < \kappa\sigma(1-c(\sigma))+1,
\end{equation}
the error term is negligible. In this case, we obtain the following lower bound:
\begin{equation}
    \label{m2m1i2i1}
    \frac{M_2}{M_1} \ge \frac{I_2}{I_1}+O\big(T^{\beta+\kappa\sigma(1+c(\sigma))+1+o(1)} \big).
\end{equation}
\par
Next, we find an effective lower bound for $I_2/I_1$. As in the proof of Theorem \ref{thm1}, we frequently use the positivity of $\widehat{\Phi}$ and obtain
\begin{align*}
    I_2 &\, \ge \int_{-\infty}^\infty \sum_{j=1}^\ell \sum_{p \le X}\frac{1}{p^{\sigma+ijt}} \sum_{m,n=1}^\infty r(m)r(n)\Big(\frac{m}{n}\Big)^{it} \Phi\Big(\frac{\log T}{T}t \Big)\mathrm{d}t \\
    &\, = \sum_{j=1}^\ell \sum_{p \le X} \frac{1}{p^\sigma} \sum_{m,n=1}^\infty r(m)r(n) \int_{-\infty}^\infty\Big(\frac{m}{np^j}\Big)^{it}\Phi\Big(\frac{\log T}{T}t \Big)\mathrm{d}t \\
    &\, \ge \sum_{j=1}^\ell \sum_{p \le X} \frac{1}{p^\sigma}  \sum_{n=1}^\infty \sum_{\substack{m \ge 1 \\ p^j \mid m}} r(m)r(n)\int_{-\infty}^\infty\Big(\frac{m}{np^j}\Big)^{it}\Phi\Big(\frac{\log T}{T}t \Big)\mathrm{d}t \\
    &\, = \sum_{j=1}^\ell \sum_{p \le X} \frac{r(p)^j}{p^\sigma}\sum_{k,n=1}^\infty \int_{-\infty}^\infty r(k)r(n) \Big(\frac{k}{n} \Big)^{it}\Phi\Big(\frac{\log T}{T}t \Big)\mathrm{d}t.
\end{align*}
Here in the last step, we set $m=kp^j$ and apply the fact that $r(n)$ is completely multiplicative. We get the following estimate from \eqref{I1lower2}:
$$
\frac{I_2}{I_1} \ge \sum_{j=1}^\ell \sum_{p \le X} \frac{r(p)^j}{p^\sigma}.
$$
\par
The definition of $r(p)$ implies that 
\begin{align*}
   \sum_{j=1}^\ell \sum_{p \le X} \frac{r(p)^j}{p^\sigma} &= \, \sum_{j=1}^\ell \sum_{p \le X} \sum_{m=0}^j \binom{j}{m}(-1)^m X^{-\sigma m} p^{\sigma(m-1)} \\
    &= \, \ell \sum_{p \le X} \frac{1}{p^\sigma} + \sum_{m=1}^\ell (-1)^m  X^{-\sigma m} \sum_{p \le X}p^{\sigma(m-1)}\sum_{j=m}^\ell \binom{j}{m} \\
    &= \, \ell \sum_{p \le X} \frac{1}{p^\sigma} + \sum_{m=1}^\ell (-1)^m \binom{\ell +1}{m+1} X^{-\sigma m} \sum_{p \le X}p^{\sigma(m-1)}.
\end{align*}
Utilizing the prime number theory, we obtain\footnote{Here, it is easy to show that $S(\sigma,\ell)>0$ for any fixed positive integer $\ell$ and $1/2<\sigma<1$.
}
\begin{equation}
    \label{I2I1lower}
    \frac{I_2}{I_1} \ge \big(S(\sigma,\ell) +o(1) \big) \frac{X^{1-\sigma}}{\log X},
\end{equation}
where
\begin{align*}
S(\sigma,\ell) & \, = \frac{\ell}{1-\sigma} - \frac{(\ell+1)\ell}{2} + \sum_{m=2}^\ell(-1)^m \binom{\ell +1}{m+1} \frac{1}{1+\sigma(m-1)} \\
   & \, = \frac{\ell}{1-\sigma} + \sum_{m=1}^\ell (-1)^m \binom{\ell+1}{m+1} \frac{1}{1+\sigma(m-1)}.
\end{align*}
\par
In order to apply \eqref{approximation2}, we also need to remove the contribution of the set $\mce$. To this end, by \eqref{measmce}, we may restrict $\kappa$ to satisfy
\begin{equation}
    \label{kappa2}
    2\kappa\sigma +\frac{9/4-3\sigma/2}{7/4-\sigma/2} < \kappa\sigma(1-c(\sigma)) +1.
\end{equation}
Therefore, combining \eqref{kappa1} and \eqref{kappa2}, we show $\kappa$ satisfies that
\begin{equation*}
\kappa < \frac{1}{\sigma(1+c(\sigma))} \min \Big(1-\beta, \frac{\sigma-1/2}{7/4-\sigma/2}\Big).
\end{equation*}
Since $X = \kappa \log T \log_2 T$, we complete the proof of Theorem \ref{thm2} by using \eqref{approximation2}, \eqref{max2}, \eqref{max3}, \eqref{m2m1i2i1} and \eqref{I2I1lower}.

\begin{rem}
    In the proof of Theorem \ref{thm2}, in order to obtain an effective approximate formula, we choose $\sigma_0 = \sigma/2+1/4$ in Lemma \ref{lem2}. However, we also note that Dong got the same formula by taking $\sigma_0 = (1-\lambda)\sigma+\lambda/2$ for $\lambda \in (0,1)$, up to the upper bound for the measure of $\mce$, see \cite[Lemma 4.6.1]{Dong2022phd}. In fact, under Dong's choice, one may let the parameter $\lambda \to 0$ so that $\sigma_0$ is sufficiently close to $\sigma$. By Lemma \ref{lem3}, adopting this choice allows for an extremely slight refinement of the range of $\kappa$. But for the sake of simplicity, we do not pursue this approach.
\end{rem}

\begin{rem}
   When $\sigma$ is close to the $1$-line, Ingham's Lemma \ref{lem3} can be improved. For example, see \cite[Chapter 11.4]{ivic2012riemann} for details. By employing these more refined zero-density results, one can slightly enlarge the range of $\kappa$.

\end{rem}

\section{A short proof for Theorem \ref{thm3}}
\label{sec4}
To establish Theorem \ref{thm3}, we merely need to slightly modify the proof of Theorem \ref{thm2}. Take $Y= (\log T)^{2026/(\sigma-1/2)}$. By Lemma \ref{lem2}, we have
$$
   \prod_{j=1}^\ell\zeta(\sigma+ijt) = \prod_{j=1}^\ell\zeta(\sigma+ijt;Y)\Big(1+O\Big(\frac{1}{\log T}\Big) \Big)
$$
for all $t \in [T^\beta,T]$. Assuming RH, there are no zeros of $\zeta(s)$ in the region $\sigma > 1/2$, which removes the contribution of $\mce$ in Section \ref{sec3}. The remaining argument is essentially the same, and we therefore omit the details.

\section{Final comments}
Levinson \cite{levinson1972harmonic} noted that joint extreme values for 
$\prod_{j=1}^\ell \zeta(1/2+ijt)$ and $\prod_{j=1}^\ell 1/\zeta(1+ijt)$ can also be obtained.
 However, the long resonance method employed in this paper is no longer applicable in that setting. 
Since $1/\zeta(s) = \sum_{n \ge 1} \mu(n) n^{-s}$, ``positivity'' is lost due to the presence of the Möbius function $\mu(n)$, a property crucial in discarding many terms when establishing lower bounds. The method described in this paper is not applicable when $\sigma = 1/2$, as there are infinitely many zeros on the critical line. To overcome this, a convolution formula analogous to that in \cite{bondarenko2018argument} would need to be established, along with a computation of a more elaborate resonator $R(t)$. In addition, the maximum of $|\zeta(1/2+it)|$ was predicted earlier by Farmer et al. \cite{farmer2007maximum}. This problem is left for future work. 
\par
 Following the work of Aistleitner et al. \cite{aistleitner2019extreme}, Dixit and Mahatab \cite{dixit2021large} also studied lower bounds for a general family of $L$-functions on the $1$-line. Hence, it is possible to apply our method to obtain analogous results for these $L$-functions.

\section*{Acknowledgments}
Qiyu Yang was supported by the Natural Science Foundation of Henan Province (Grant No. 252300421782).
	
	\bibliographystyle{siam}
    \bibliography{reference}

\begin{thebibliography}{10}

\bibitem{aistleitner2016lower}
{\sc C.~Aistleitner}, {\em Lower bounds for the maximum of the {R}iemann zeta function along vertical lines}, Math. Ann., 365 (2016), pp.~473--496.

\bibitem{aistleitner2019extreme}
{\sc C.~Aistleitner, K.~Mahatab, and M.~Munsch}, {\em Extreme values of the {R}iemann zeta function on the 1-line}, Int. Math. Res. Not., IMRN 2019 (2019), pp.~6924--6932.

\bibitem{bondarenko2018argument}
{\sc A.~Bondarenko and K.~Seip}, {\em Extreme values of the {R}iemann zeta function and its argument}, Math. Ann., 372 (2018), pp.~999--1015.

\bibitem{bondarenko2018note}
{\sc A.~Bondarenko and K.~Seip}, {\em Note on the resonance method for the {R}iemann zeta function}, Oper. Theory Adv.Appl., 261 (2018), pp.~121--139.

\bibitem{dixit2021large}
{\sc A.~B. Dixit and K.~Mahatab}, {\em Large values of {$L$}-functions on the 1-line}, Bull. Aust. Math. Soc., 103 (2021), pp.~230--243.

\bibitem{Dong2022phd}
{\sc Z.~Dong}, {\em Distribution of values of the {R}iemann zeta function}.
\newblock Université Paris-Est Créteil Val-de-Marne - Paris 12, 2022.

\bibitem{dong2023note}
{\sc Z.~Dong and B.~Wei}, {\em A note on large values of $| \zeta (\sigma+ it)|$}, Bull. Aust. Math. Soc., 108 (2023), pp.~217--223.

\bibitem{farmer2007maximum}
{\sc D.~W. Farmer, S.~M. Gonek, and C.~P. Hughes}, {\em The maximum size of {$L$}-functions}, J. Reine Angew. Math., 609 (2007), pp.~215--236.

\bibitem{Gran06extreme}
{\sc A.~Granville and K.~Soundararajan}, {\em Extreme values of $|\zeta(1 + it)|$}, in The Riemann zeta function and related themes: papers in honour of Professor K. Ramachandra, vol.~2 of Ramanujan Math. Soc. Lect. Notes Ser., Ramanujan Math. Soc., Mysore, 2006, p.~65–80.

\bibitem{Ingham1940}
{\sc A.~E. Ingham}, {\em On the estimation of \({N}(\sigma, {T})\)}, Quart. J. Math. Oxford Ser., 11 (1940), pp.~291--292.

\bibitem{ivic2012riemann}
{\sc A.~Ivi\'{c}}, {\em The {R}iemann zeta-function: theory and applications}, Dover Books on Mathematics, Dover Publications, 2003.

\bibitem{lamzouri2011distribution}
{\sc Y.~Lamzouri}, {\em On the distribution of extreme values of zeta and {$L$}-functions in the strip $1/2< \sigma< 1$}, Int. Math. Res. Not., IMRN 2011 (2011), pp.~5449--5503.

\bibitem{levinson1972harmonic}
{\sc N.~Levinson}, {\em {$\omega$}-theorems for {R}iemann's zeta-function at harmonic combinations of points}, Proc. Nat. Acad. Sci., 69 (1972), pp.~1657--1658.

\bibitem{levinson1972omega}
{\sc N.~Levinson}, {\em {$\Omega$}-theorems for the {R}iemann zeta-function}, Acta Arith., 20 (1972), pp.~317--330.

\bibitem{montgomery1977extreme}
{\sc H.~L. Montgomery}, {\em Extreme values of the {R}iemann zeta function}, Comment. Math. Helv., 52 (1977).

\bibitem{yang2022extreme}
{\sc D.~Yang}, {\em Extreme values of derivatives of the {R}iemann zeta function}, Mathematika, 68 (2022), pp.~486--510.

\bibitem{qiyu2024large}
{\sc Q.~Yang}, {\em Large values of $\zeta(s)$ for $1/2<${R}e$(s)<1$}, J. Number Theory, 254 (2024), pp.~199--213.

\end{thebibliography}
\end{document}